\newtheorem{theorem}{Theorem}[section]
\newtheorem{corollary}[theorem]{Corollary}
\newtheorem{lemma}[theorem]{Lemma}
\newenvironment{proof}[1][Proof]{\textbf{#1.} }{\hfill\rule{0.5em}{0.5em}}
{\catcode`\@=11\global\let\AddToReset=\@addtoreset
\AddToReset{equation}{section}

\AddToReset{theorem}{section}

\title{A remark on the interpolation inequality between Sobolev spaces and Morrey spaces}
%\author{Minh-Phuong Tran \\ Applied Analysis Research Group,\\ Faculty of Mathematics and Statistics,\\ Ton Duc Thang University, Ho Chi Minh City, Viet Nam\medskip\\ Email: \texttt{tranminhphuong@tdtu.edu.vn}  }
\author{Minh-Phuong Tran$^a$, Thanh-Nhan Nguyen$^b$ \\ {\small $^a$Applied Analysis Research Group, Faculty of Mathematics and Statistics,\\ Ton Duc Thang University, Ho Chi Minh City, Viet Nam\medskip\\ Email: \texttt{tranminhphuong@tdtu.edu.vn}  \medskip \medskip \\ $^b$Department of Mathematics,  Ho Chi Minh City University of Education,\\ Ho Chi Minh City, Viet Nam\medskip\\ Email: \texttt{nguyenthnhan@hcmup.edu.vn}} }
\date{\today} 		
		
\begin{document}
			
\maketitle
\begin{abstract}
Interpolation inequalities play an important role in the study of PDEs and their applications. There are still some interesting open questions and problems that related to integral estimates and regularity of solutions to the elliptic and/or parabolic equations. Since many of researchers who are working on this domain, the main motivation of our work here is to provide an important observation of $L^p$-boundedness property with respect to the interpolation inequalities. In this paper, from an idea of integral approximation theory and Sobolev, Morrey embeddings, we construct a nontrivial counterexample for interpolation inequalities in the connection of results between Sobolev and Morrey spaces. Our proofs rely on the integral representation and the theory of maximal and sharp maximal functions.

\medskip

\medskip

\medskip

\noindent 

\medskip

\end{abstract}   

\section{Introduction}
\label{sec:intro}

In the theory of partial differential equations, the study of interpolation inequalities in Lebesgue spaces, Sobolev spaces, Morrey spaces play an important role. There has been still a lot to develop in our investigations relating to these interesting inequalities, arising from $L^p$-estimates or regularity results for $L^p$ solutions of the PDEs. It is mentioned here that recently, there is an increasing literature devoted to the study of these inequalities and their improvements, such as \cite{MRR2013,VS2014, Dao2018} and references given there. In this paper, we are interested in the interpolation inequality between the Sobolev and Morrey spaces and we highlight in the proof of a boundedness property in Lebesgue spaces. More precisely and specifically, in the application of interpolation between Sobolev and Morrey spaces, we provide a counterexample, in which the boundedness property does not hold. This gives an important result that helpful in studying bounds on solution of general classes of PDEs. 

Let us consider in the Euclidean space $\mathbb{R}^d$, $d \ge 2$ and with some given parameters $1\leq p<d$ and $1<q<\frac{pd}{d-p}$, we therein focus on a brief proof of a problem related to the $L^r$ boundedness property of a function $u \in C_c^1(\mathbb{R}^d)$:
\begin{align}\label{eq:m}
\int_{\mathbb{R}^d}{|u|^r d\mathbf{x}} \leq C, \quad \forall r\in \left[p\left(\frac{q}{d}+1\right),\frac{pd}{d-p}\right].
\end{align}

This study is furthermore interconnected to solution regularity of PDEs, especially non-linear equations, and play a significant role in analysis. In particular, it makes sense to study solution properties in this range of parameter $r$. As we will show in the present work, it gives a counterexample that in \eqref{eq:m}, the inequality does not hold for some range of $r$.\

In order to state our result, we firstly recall some notations and definitions that related to our proof. Here we only recall the essentials that we use in the statements and proofs. The notations used throughout the paper are standard and some conventions are given as following. For definitions and properties of spaces and some operators we recall as below, we refer the reader to many textbooks and reference materials in \cite{Adams1,Adams2,AF,HB}.

\begin{enumerate}
\item \emph{Open ball:} If $\mathbf{x} \in \mathbb{R}^d$ and $r$ is a positive real number, we denote $B_r(\mathbf{x}) = \{\mathbf{y} \in \mathbb{R}^d: |\mathbf{y}-\mathbf{x}|<r\}$ the open ball in $\mathbb{R}^d$.
\item \emph{Average integral:} the denotation $\displaystyle{\fint_{B_r(\mathbf{x})}{f(\mathbf{y})d\mathbf{y}}}$ indicates the integral average of $f$ in the variable $y$ over the ball $B_r(\mathbf{x})$ or mean value of the function $f$ over that ball, i.e.
\begin{align*}
\fint_{B_r(\mathbf{x})}{f(\mathbf{y})d\mathbf{y}} = \frac{1}{|B_\rho(\mathbf{x})|}\int_{B_r(\mathbf{x})}{f(\mathbf{y})d\mathbf{y}},
\end{align*}
where $|B|$ denotes the $d$-dimensional Lebesgue measure of a set $B \subset \mathbb{R}^d$.
\item For any set $I \subset \mathbb{R}$, let us denote by $C^k_c(\mathbb{R}^{m},I)$ (and $C^\infty_c(\mathbb{R}^{m},I)$)  the set of $k$-th order differentiable (infinitely differentiable, respectively) function $f$  with compact support, i.e., there exists a compact set $K \subset I$ such that 
 $$\mbox{supp}(f) = \left\{\mathbf{x} \in \mathbb{R}^m: \ f(\mathbf{x}) \neq 0\right\} \subset K.$$
\item \emph{The Hardy-Littlewood maximal function:} For $\mathbf{x} \in \mathbb{R}^d$, the Hardy-Littlewood maximal function is defined for each locally integrable function $f$ in $\mathbb{R}^d$ by
\begin{align*}
\mathbf{M}(f)(\mathbf{x})=\sup_{\rho>0}\fint_{B_{\rho}(\mathbf{x})}{|f|d\mathbf{y}}.
\end{align*}
\item \emph{Sharp maximal function:} For $\mathbf{x} \in \mathbb{R}^d$, the sharp maximal function of $f$ is defined as
\begin{align}\label{eq:sharpM}
\mathbf{M}_{\#}(f)(\mathbf{x})=\sup_{\rho>0}\fint_{B_{\rho}(\mathbf{x})}{\left| f(\mathbf{y})-\fint_{B_{\rho}(\mathbf{x})}f(\mathbf{z})d\mathbf{z}\right|d\mathbf{y}}.
\end{align}
\item By $a \lesssim b$, we mean that $a \le Cb$ with some positive real constant $C$ depends
on inessential parameters.
\item We write $a \sim b$ means that there exist some positive real constants $C_1, C_2>0$ such that: $C_1 a \le b \le C_2 a$.
\end{enumerate}

The rest of this paper is structured as follows. In the next section, we state main result and some preparatory lemmas are also given therein. In Section \ref{sec:proof}, we give the detailed proof for the main theorem and some necessary proofs are also brought to reach our conclusion.

\section{Statement of main results}
\label{sec:main}
In this section, we list all the lemmas, theorems, and corollaries that will be discussed and proved. The first integral inequality is stated in Theorem \ref{lem:1} as follows, where its proof can be found in Section \ref{sec:proof}. 

%the inequality of this type is also known as Gagliardo-Nirenberg inequality, and

\begin{lemma} 
\label{lem:1}	
Let $1 < p<d$ and $1<q<\frac{pd}{d-p}$. Then, the following integral inequality holds
	\begin{align}
	\label{es}
			\int_{\mathbb{R}^d}|u|^{p\left(\frac{q}{d}+1\right)}d\mathbf{x}\leq C  \int_{\mathbb{R}^d}|\nabla u|^pd\mathbf{x} \left(\sup_{B_\rho(\mathbf{z})}\rho^{\frac{d}{q}}\fint_{B_\rho(\mathbf{z})}|u|d\mathbf{x}\right)^{\frac{qp}{d}},~~\forall~u\in C^1_c(\mathbb{R}^d).
	\end{align}
\end{lemma}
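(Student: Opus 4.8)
The strategy is to prove a pointwise interpolation inequality for $|u(\mathbf{x})|$ that balances the Hardy--Littlewood maximal function of $\nabla u$ against the Morrey-type quantity on the right of \eqref{es}, and then integrate. Write $K := \sup_{B_\rho(\mathbf{z})}\rho^{d/q}\fint_{B_\rho(\mathbf{z})}|u|\,d\mathbf{y}$, which is finite for $u\in C^1_c(\mathbb{R}^d)$ since $\rho^{d/q}\fint_{B_\rho}|u|\to 0$ both as $\rho\to 0$ (because $d/q>0$) and as $\rho\to\infty$ (because $d/q<d$, using $q>1$, while $\fint_{B_\rho}|u|\sim\rho^{-d}\|u\|_{L^1}$). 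First I would recall the classical estimate obtained by integrating $\nabla u$ along segments and averaging: for every $\rho>0$,
\[
\left| u(\mathbf{x}) - \fint_{B_\rho(\mathbf{x})}u\,d\mathbf{y}\right| \le C\int_{B_\rho(\mathbf{x})}\frac{|\nabla u(\mathbf{y})|}{|\mathbf{x}-\mathbf{y}|^{d-1}}\,d\mathbf{y},
\]
and then bound the truncated Riesz potential on the right by a dyadic annular decomposition of $B_\rho(\mathbf{x})$, which yields $\int_{B_\rho(\mathbf{x})}|\nabla u(\mathbf{y})||\mathbf{x}-\mathbf{y}|^{1-d}\,d\mathbf{y}\le C\rho\,\mathbf{M}(\nabla u)(\mathbf{x})$. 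Since also $\big|\fint_{B_\rho(\mathbf{x})}u\,d\mathbf{y}\big|\le\fint_{B_\rho(\mathbf{x})}|u|\,d\mathbf{y}\le K\rho^{-d/q}$, we reach
\[
|u(\mathbf{x})|\le K\rho^{-d/q}+C\rho\,\mathbf{M}(\nabla u)(\mathbf{x}),\qquad\forall\,\rho>0.
\]

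Next I would optimize in $\rho$. If $\mathbf{M}(\nabla u)(\mathbf{x})=0$ then $\nabla u\equiv 0$ and, $u$ having compact support, $u\equiv 0$, so assume $\mathbf{M}(\nabla u)(\mathbf{x})>0$ and take $\rho\sim\big(K/\mathbf{M}(\nabla u)(\mathbf{x})\big)^{q/(d+q)}$, which equalizes the two terms and gives the pointwise inequality
\[
|u(\mathbf{x})|\le C\,K^{\frac{q}{d+q}}\big(\mathbf{M}(\nabla u)(\mathbf{x})\big)^{\frac{d}{d+q}}.
\]
Raising to the power $p\big(\frac{q}{d}+1\big)=p\,\frac{d+q}{d}$, the exponents collapse exactly to
\[
|u(\mathbf{x})|^{p\left(\frac{q}{d}+1\right)}\le C\,K^{\frac{pq}{d}}\big(\mathbf{M}(\nabla u)(\mathbf{x})\big)^{p}.
\]
Integrating over $\mathbb{R}^d$ and using the $L^p$-boundedness of the Hardy--Littlewood maximal operator (here the hypothesis $p>1$ enters), namely $\int_{\mathbb{R}^d}\big(\mathbf{M}(\nabla u)\big)^p\,d\mathbf{x}\le C\int_{\mathbb{R}^d}|\nabla u|^p\,d\mathbf{x}$, produces precisely \eqref{es}.

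The only nontrivial ingredient is the two-sided pointwise control in the first paragraph --- concretely, the representation of $u(\mathbf{x})-\fint_{B_\rho(\mathbf{x})}u$ through $\nabla u$ and the estimate of the resulting truncated potential by $\rho\,\mathbf{M}(\nabla u)(\mathbf{x})$; everything afterwards is the one-parameter optimization and an application of a standard maximal-function bound. I therefore expect no genuine difficulty beyond carefully tracking the constants and the exponent arithmetic, and I note that the condition $q<\frac{pd}{d-p}$ is not needed for this lemma (it will matter for the subsequent results and the counterexample).
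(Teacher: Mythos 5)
Your proof is correct, but it takes a genuinely different route from the paper's. The paper stays at the level of oscillations: it applies the Poincar\'e inequality to bound $\rho^{-1}\fint_{B_\rho(\mathbf{y})}|u-(u)_{B_\rho(\mathbf{y})}|$ by $\mathbf{M}(|\nabla u|)(\mathbf{y})$, splits $[\mathbf{M}_{\#}(u)]^{p(\frac{q}{d}+1)}$ into the product $\bigl(\rho^{d/q}\fint|u-(u)_{B_\rho}|\bigr)^{pq/d}\cdot\bigl(\rho^{-1}\fint|u-(u)_{B_\rho}|\bigr)^{p}$ (the powers of $\rho$ cancel), and then crucially invokes the Fefferman--Stein inequality $\|u\|_{L^s}\lesssim\|\mathbf{M}_{\#}(u)\|_{L^s}$ to pass from the sharp maximal function back to $u$, plus the $L^p$ bound for $\mathbf{M}$. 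You instead control $|u(\mathbf{x})|$ itself via Hedberg's trick: the subrepresentation formula, the truncated Riesz potential bound $\int_{B_\rho}|\nabla u||\mathbf{x}-\mathbf{y}|^{1-d}\,d\mathbf{y}\lesssim\rho\,\mathbf{M}(\nabla u)(\mathbf{x})$, the bound $|\fint_{B_\rho}u|\le K\rho^{-d/q}$, and optimization in $\rho$, which yields the pointwise inequality $|u|\lesssim K^{q/(d+q)}\mathbf{M}(\nabla u)^{d/(d+q)}$ before integrating. Your balancing of the two terms in $\rho$ plays exactly the role of the paper's exponent splitting, but your argument is more elementary and self-contained: it needs only the Hardy--Littlewood maximal theorem (where $p>1$ enters, as you note), whereas the paper additionally relies on the Fefferman--Stein sharp-function inequality (and one should check its applicability hypotheses for $u\in C^1_c$). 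Your pointwise inequality is also a slightly stronger intermediate statement than the paper's integral one. Your side remarks --- that $K<\infty$ for compactly supported $C^1$ functions and that the hypothesis $q<\frac{pd}{d-p}$ is not used in this lemma --- are both accurate; the exponent arithmetic ($\frac{q}{d+q}\cdot p\frac{d+q}{d}=\frac{pq}{d}$ and $\frac{d}{d+q}\cdot p\frac{d+q}{d}=p$) checks out.
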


The inequality we will state in next Theorem is also known as interpolation inequality in Lebesgue spaces. As an application of the former result in \eqref{es}, Sobolev's inequality, H\"older's inequality (see \cite{HB}), we then obtain  the following simple result.

\begin{theorem} \label{theo:coro}	
Let $1< p<d$ and $1<q<\frac{pd}{d-p}$. If 
\begin{align}\label{eq:cor1}
\int_{\mathbb{R}^d}{|\nabla u|^pd\mathbf{x}} \leq 1,
\end{align}
and
\begin{align}\label{eq:cor2}
\sup_{B_\rho(\mathbf{z})}\rho^{\frac{d(q_1-q)}{q}}\int_{B_\rho(\mathbf{z})}{|u|^{q_1}d\mathbf{x}}\leq  1,
\end{align}
for some  $1\leq q_1 \le q$, then
\begin{align}
\label{1}
\int_{\mathbb{R}^d}{|u|^r d\mathbf{x}} \leq C, \quad \forall r\in \left[p\left(\frac{q}{d}+1\right),\frac{pd}{d-p}\right].
\end{align}
\end{theorem}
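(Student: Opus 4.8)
The plan is to reduce \eqref{1} to the two endpoint estimates at $r_0:=p\left(\frac{q}{d}+1\right)$ and $r_1:=\frac{pd}{d-p}$, and then to obtain every intermediate exponent by the elementary interpolation inequality for $L^r$-norms. Observe first that the standing assumption $q<\frac{pd}{d-p}$ is exactly what makes $r_0\le r_1$, so the interval $\left[r_0,r_1\right]$ is nonempty and the statement is meaningful.

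For the left endpoint $r=r_0$ I would start from Lemma \ref{lem:1}: inequality \eqref{es} bounds $\int_{\mathbb{R}^d}|u|^{r_0}d\mathbf{x}$ by a constant times $\int_{\mathbb{R}^d}|\nabla u|^pd\mathbf{x}$, which is $\le 1$ by \eqref{eq:cor1}, multiplied by the $\frac{qp}{d}$-th power of the Morrey-type average $\displaystyle\sup_{B_\rho(\mathbf{z})}\rho^{\frac{d}{q}}\fint_{B_\rho(\mathbf{z})}|u|\,d\mathbf{x}$. To control the latter I would apply H\"older's inequality with exponent $q_1$ on each ball together with $|B_\rho(\mathbf{z})|\sim\rho^d$:
\[
\rho^{\frac{d}{q}}\fint_{B_\rho(\mathbf{z})}|u|\,d\mathbf{x}\ \lesssim\ \rho^{\frac{d}{q}-\frac{d}{q_1}}\left(\int_{B_\rho(\mathbf{z})}|u|^{q_1}\,d\mathbf{x}\right)^{\frac{1}{q_1}}=\left(\rho^{\frac{d(q_1-q)}{q}}\int_{B_\rho(\mathbf{z})}|u|^{q_1}\,d\mathbf{x}\right)^{\frac{1}{q_1}},
\]
and the right-hand side is $\le 1$ by \eqref{eq:cor2}. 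Feeding this back into \eqref{es} gives $\int_{\mathbb{R}^d}|u|^{r_0}\,d\mathbf{x}\le C$.

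For the right endpoint $r=r_1$ I would simply invoke the Sobolev embedding $W^{1,p}(\mathbb{R}^d)\hookrightarrow L^{r_1}(\mathbb{R}^d)$, which yields $\int_{\mathbb{R}^d}|u|^{r_1}\,d\mathbf{x}\le C\left(\int_{\mathbb{R}^d}|\nabla u|^p\,d\mathbf{x}\right)^{\frac{d}{d-p}}\le C$ by \eqref{eq:cor1}. Finally, for $r\in(r_0,r_1)$, I would write $\frac{1}{r}=\frac{\theta}{r_0}+\frac{1-\theta}{r_1}$ with a unique $\theta\in(0,1)$ and use the interpolation inequality $\|u\|_{L^r}\le\|u\|_{L^{r_0}}^{\theta}\,\|u\|_{L^{r_1}}^{1-\theta}$; since both $\|u\|_{L^{r_0}}$ and $\|u\|_{L^{r_1}}$ are now bounded by constants and $r$ ranges over the compact interval $[r_0,r_1]$, we conclude $\int_{\mathbb{R}^d}|u|^r\,d\mathbf{x}\le C$ with $C$ depending only on $d,p,q,q_1$.

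Given Lemma \ref{lem:1}, none of these steps is deep: the real content has been packaged into \eqref{es}, and Theorem \ref{theo:coro} is essentially a bookkeeping corollary combining that lemma with Sobolev's and H\"older's inequalities. The one place that deserves attention is the H\"older step above, where one must check that the $\rho$-exponent $q_1\bigl(\frac{d}{q}-\frac{d}{q_1}\bigr)=\frac{d(q_1-q)}{q}$ matches precisely the weight in hypothesis \eqref{eq:cor2}; this matching is what forces the scaling-invariant form of \eqref{eq:cor2} and explains why the hypothesis is stated with that particular power of $\rho$. Everything else is a matter of keeping the constants uniform in $u$.
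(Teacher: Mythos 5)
Your proposal is correct and follows essentially the same route as the paper's own proof: Sobolev's inequality for the endpoint $r=\frac{pd}{d-p}$, H\"older's inequality on balls to convert hypothesis \eqref{eq:cor2} into the $L^1$-Morrey bound required by Lemma \ref{lem:1} for the endpoint $r=p\left(\frac{q}{d}+1\right)$, and Lebesgue-space interpolation in between. The exponent bookkeeping in your H\"older step matches the paper's computation exactly, so there is nothing to add.
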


The proof of Theorem \ref{theo:coro} that we will show in next section \ref{sec:proof} brings us up  an interesting problem to consider. More precisely, a further question that arises here pertains to the range of $r$ in \eqref{1}, is that possible to extend value range of $r$ where the interpolation inequality \eqref{1} still holds? Dealing with this problem enables us to have a complete picture in studying of  interpolation inequalities in Lebesgue spaces, and somewhat related to Gagliardo-Nirenberg inequalities also their generalizations (we refer the reader to \cite{Nirenberg1959,Adams1,Adams2,MRR2013,Dao2018} and literature related to this subject). This question follows our interest in the theory of PDEs, that could provide a procedure to construct regularity, some comparison estimates or important properties of solutions to elliptic and/or parabolic equations in future studies. That is the reason why in the present work, our study on this problem could be considered.

The following result of next Corollary \ref{rmk} asserts that inequality \eqref{1} holds for extended range of $r$ only for a specific case $q_1=q$. Also, we refer the reader here to Section \ref{sec:proof} for a brief proof.
\begin{corollary}
\label{rmk} 
If $q_1=q$ in Corollary \ref{theo:coro}, then \eqref{1} is still true for any $r\in \left[q,p\left(\frac{q}{d}+1\right)\right].$
\end{corollary}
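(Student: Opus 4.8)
The plan is to notice first that the special case $q_1=q$ makes the scaling weight in \eqref{eq:cor2} trivial: the exponent $\tfrac{d(q_1-q)}{q}$ vanishes, so \eqref{eq:cor2} reduces to the uniform bound $\sup_{B_\rho(\mathbf z)}\int_{B_\rho(\mathbf z)}|u|^q\,d\mathbf x\le 1$. Since $u\in C^1_c(\mathbb R^d)$ is compactly supported, I would fix any $\mathbf z$ and let $\rho\to\infty$; monotone convergence then upgrades this to the global estimate $\int_{\mathbb R^d}|u|^q\,d\mathbf x\le 1$, i.e. $\|u\|_{L^q(\mathbb R^d)}\le 1$. This is the only place where the compact support of $u$ is genuinely used, and it is precisely the step that breaks down when $q_1\neq q$ — consistent with the counterexample advertised in the introduction.

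Next I would invoke the Sobolev (Gagliardo--Nirenberg--Sobolev) embedding: for $1\le p<d$ and $u\in C^1_c(\mathbb R^d)$ one has $\|u\|_{L^{p^\ast}(\mathbb R^d)}\le C_S\|\nabla u\|_{L^p(\mathbb R^d)}$ with $p^\ast:=\tfrac{pd}{d-p}$, so \eqref{eq:cor1} gives $\|u\|_{L^{p^\ast}(\mathbb R^d)}\le C_S$. Thus $u$ lies in $L^q\cap L^{p^\ast}$ with controlled norms, and since the standing hypothesis $q<\tfrac{pd}{d-p}$ gives $q<p^\ast$, the interval $[q,p^\ast]$ is nondegenerate.

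The final step is the classical interpolation inequality for Lebesgue norms (log-convexity, a consequence of H\"older): for every $r\in[q,p^\ast]$ write $\tfrac1r=\tfrac{1-\theta}{q}+\tfrac{\theta}{p^\ast}$ with $\theta=\theta(r)\in[0,1]$, so that
\begin{align*}
\|u\|_{L^r(\mathbb R^d)}\le \|u\|_{L^q(\mathbb R^d)}^{\,1-\theta}\,\|u\|_{L^{p^\ast}(\mathbb R^d)}^{\,\theta}\le C_S^{\,\theta}\le \max\{1,C_S\}=:C,
\end{align*}
whence $\int_{\mathbb R^d}|u|^r\,d\mathbf x\le C^r$. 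It remains only to check the inclusion $\big[q,\,p(\tfrac qd+1)\big]\subset[q,p^\ast]$: both $q\le p(\tfrac qd+1)$ and $p(\tfrac qd+1)\le p^\ast$ are elementary rearrangements of the inequality $q\le\tfrac{pd}{d-p}$, which holds. Applying the displayed bound on this subinterval yields \eqref{1} for all $r\in[q,p(\tfrac qd+1)]$, as claimed. I do not expect a real obstacle here; the content of the statement is entirely in the observation of the first paragraph, that $q_1=q$ turns the localized Morrey-type control into a bona fide $L^q$ bound.
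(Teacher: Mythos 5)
Your proof is correct, and it follows the paper's first step exactly: when $q_1=q$ the weight $\rho^{d(q_1-q)/q}$ disappears, and letting $\rho\to\infty$ in \eqref{eq:cor2} yields the global bound $\int_{\mathbb{R}^d}|u|^q\,d\mathbf{x}\le 1$. Where you diverge is in the choice of the upper interpolation endpoint. The paper interpolates between $L^q$ and $L^{p(\frac{q}{d}+1)}$, using the bound $\int_{\mathbb{R}^d}|u|^{p(\frac{q}{d}+1)}\,d\mathbf{x}\le C$ that comes out of Lemma \ref{lem:1} (via the estimate \eqref{est:lem2} established in the proof of Theorem \ref{theo:coro}); this exactly covers the claimed interval $\bigl[q,\,p(\tfrac{q}{d}+1)\bigr]$ with no slack. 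You instead interpolate between $L^q$ and $L^{p^\ast}$ with $p^\ast=\tfrac{pd}{d-p}$, using only the Sobolev inequality and \eqref{eq:cor1}, and then observe that $\bigl[q,\,p(\tfrac{q}{d}+1)\bigr]\subset[q,p^\ast]$. Your route is more elementary in that it bypasses Lemma \ref{lem:1} entirely and in fact proves the stronger statement that \eqref{1} holds for every $r\in[q,p^\ast]$ in one stroke (so that, in the case $q_1=q$, Theorem \ref{theo:coro} and Corollary \ref{rmk} both follow without the maximal-function machinery); the paper's route keeps the corollary as a genuine extension of the interval obtained in Theorem \ref{theo:coro}, which is the framing the authors want for the subsequent counterexample. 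Your verification that $q\le p(\tfrac{q}{d}+1)\le p^\ast$ reduces to $q\le\tfrac{pd}{d-p}$ is accurate, and the uniformity of the constant over the compact range of $r$ is handled correctly.
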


However, as we will show in this paper, this interpolation inequality does not hold for $r < p\left(\frac{q}{d}+1\right)$. It provides us a very important result, that the range of $r$ in Theorem \ref{theo:coro} is optimal. Indeed, a counterexample is a specific instance where the inequality \eqref{1} does not hold outside the interval $\left[q,p\left(\frac{q}{d}+1\right)\right]$, is stated in the following theorem.

%and the assertion  of Lemma \ref{lem:1} is extremely useful tool for proving the counterexample we are interested in (see Theorem \ref{theo:main} that we will show later in this section)

\begin{theorem} 
\label{theo:main}
Let $1< p<d$ and $1<q<\frac{pd}{d-p}$. Then, for any $1\leq q_1<q$, there exists a sequence $(u_n)_n \in C^\infty_c(\mathbb{R}^d)$ such that 
\begin{align}
\label{es5}
\int_{\mathbb{R}^d}|\nabla u_n|^pd\mathbf{x}\leq 1,~~ \sup_{B_\rho(\mathbf{z})}\rho^{\frac{d(q_1-q)}{q}}\int_{B_\rho(\mathbf{z})}{|u_n|^{q_1}d\mathbf{x}} \leq  1.
\end{align}
Moreover, for any $r>0$ there holds
\begin{align}
\label{es6}
\int_{\mathbb{R}^d}{|u_n|^r d\mathbf{x}} \sim 2^{\frac{d^2(q-q_1)}{q(dp-(d-p)q)}\left(p(1+\frac{q}{d})-r\right)n}, \quad \forall n \gg 1.
\end{align}
In particular, 
	\begin{align}\label{es7}
	\int_{\mathbb{R}^d}{|u_n|^r d\mathbf{x}} \leq C, \quad \forall n\gg 1
	\end{align}
if and only if	$r\geq p\left(\frac{q}{d}+1\right)$.
\end{theorem}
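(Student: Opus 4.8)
The plan is to construct $(u_n)_n$ explicitly as rescaled and truncated copies of a fixed profile, tuned so that the $L^p$-Dirichlet energy and the $q_1$-Morrey norm are both exactly at the critical scaling threshold, while the $L^r$ norms scale like a power of the dilation parameter. First I would fix a bump function $\varphi \in C_c^\infty(\mathbb{R}^d,[0,1])$ with $\varphi \equiv 1$ on $B_1(0)$ and $\operatorname{supp}\varphi \subset B_2(0)$, and look for $u_n$ of the form $u_n(\mathbf{x}) = a_n\, \varphi(\mathbf{x}/\lambda_n)$ with two free parameters $a_n>0$ (height) and $\lambda_n>0$ (width). A direct change of variables gives $\int |\nabla u_n|^p\,d\mathbf{x} = a_n^p \lambda_n^{d-p}\int|\nabla\varphi|^p\,d\mathbf{x}$, and $\int|u_n|^r\,d\mathbf{x} = a_n^r\lambda_n^d\int|\varphi|^r\,d\mathbf{x}$ for every $r>0$. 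The Morrey quantity $\sup_{B_\rho(\mathbf z)} \rho^{d(q_1-q)/q}\int_{B_\rho(\mathbf z)}|u_n|^{q_1}\,d\mathbf{x}$ is, up to a constant depending only on $\varphi$ and the exponents, comparable to $a_n^{q_1}\lambda_n^{d q_1/q}$ (the supremum over $\rho$ is attained at scale $\rho\sim\lambda_n$ because $q_1<q$ makes the exponent $dq_1/q$ of the relevant monomial positive, so small balls dominate large ones; this monotonicity check is where one must be slightly careful).

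Next I would impose the two constraints in \eqref{es5} as equalities up to constants: $a_n^p\lambda_n^{d-p}\sim 1$ and $a_n^{q_1}\lambda_n^{dq_1/q}\sim 1$. Solving this $2\times 2$ linear system in $(\log a_n,\log\lambda_n)$ — the determinant is $\frac{d q_1}{q}\cdot p - q_1(d-p) = \frac{q_1}{q}\big(dp-(d-p)q\big)$, which is nonzero precisely because $q<\frac{pd}{d-p}$ — yields unique exponents $\alpha,\beta$ with $a_n\sim 2^{\alpha n}$ and $\lambda_n\sim 2^{\beta n}$ once we set, say, the scale to march geometrically. Plugging these into $\int|u_n|^r\,d\mathbf{x}\sim a_n^r\lambda_n^d$ produces an exponential in $n$ whose exponent is an affine function of $r$; a bookkeeping computation should match it to $\frac{d^2(q-q_1)}{q(dp-(d-p)q)}\big(p(1+q/d)-r\big)$ as claimed in \eqref{es6}. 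In particular the exponent vanishes exactly at $r=p(1+q/d)$, is positive for $r<p(1+q/d)$ and negative for $r>p(1+q/d)$, which immediately gives the "if and only if" in \eqref{es7}: for $r\ge p(1+q/d)$ the integrals are bounded (indeed $\to 0$ when $r>p(1+q/d)$ and $\sim 1$ at the endpoint), while for $r<p(1+q/d)$ they blow up.

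One technical point to handle with care: the constraints can only be arranged up to multiplicative constants, so after solving the system one should multiply $u_n$ by a fixed constant $c_0\le 1$ (independent of $n$) to turn the "$\lesssim 1$" into genuine "$\le 1$" in both inequalities of \eqref{es5}; this only rescales the implied constant in \eqref{es6} and does not affect the exponent. Another point is verifying that the sharp/Hardy–Littlewood maximal machinery is not actually needed for this construction — the theorem is a pure scaling counterexample — so I would keep the proof self-contained and elementary, relying only on changes of variables. The main obstacle, and the step I expect to demand the most care, is the exact evaluation of the Morrey supremum: one must confirm that for the single-bump profile the supremum over all balls $B_\rho(\mathbf z)$ (not just concentric ones) is comparable to $a_n^{q_1}\lambda_n^{dq_1/q}$, i.e. that no off-center small ball does better by more than a constant factor, and that the power of $\rho$ indeed makes scale $\rho\sim\lambda_n$ the maximizer — this is where the hypothesis $q_1<q$ is essential and where a sloppy estimate would break the claimed sharp constant in the exponent of \eqref{es6}.
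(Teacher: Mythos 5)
There is a fatal gap at the very center of your construction: the single-bump ansatz $u_n=a_n\varphi(\cdot/\lambda_n)$ cannot work, and the determinant computation you offer as reassurance is in fact the proof that it fails. Imposing $a_n^p\lambda_n^{d-p}\sim 1$ and $a_n^{q_1}\lambda_n^{dq_1/q}\sim 1$ and writing $a_n\sim 2^{\alpha n}$, $\lambda_n\sim 2^{\beta n}$ gives the \emph{homogeneous} linear system $p\alpha+(d-p)\beta=0$, $q_1\alpha+\tfrac{dq_1}{q}\beta=0$. Since its determinant $\tfrac{q_1}{q}\bigl(dp-(d-p)q\bigr)$ is nonzero (exactly because $q<\tfrac{pd}{d-p}$), the only solution is $\alpha=\beta=0$: the two constraints over-determine your two free parameters, forcing $a_n\sim\lambda_n\sim 1$ and hence $\int|u_n|^r\,d\mathbf{x}\sim 1$ for every $r$. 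That matches \eqref{es6} only at the single value $r=p(1+\tfrac{q}{d})$ and produces no blow-up anywhere. Relaxing the constraints to inequalities does not rescue the argument either: optimizing $r\alpha+d\beta>0$ subject to $p\alpha+(d-p)\beta\le 0$ and $\alpha+\tfrac{d}{q}\beta\le 0$ yields blow-up only for $r<q$ (spreading bumps) or $r>\tfrac{pd}{d-p}$ (concentrating bumps), and since $q<p(1+\tfrac{q}{d})$ this misses precisely the range $r\in[q,\,p(1+\tfrac{q}{d}))$ that one must cover to show the endpoint $p(1+\tfrac{q}{d})$ is optimal. So the theorem is not ``a pure scaling counterexample,'' contrary to your closing remark; your worry about the Morrey supremum (which you handle correctly) is not where the difficulty lies.

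What is missing is a third degree of freedom that decouples the \emph{local} Morrey density from the \emph{global} mass, and this is exactly what the paper's construction supplies. The paper replaces the single bump by a sparse multi-scale ``comb'' in one coordinate: $\sigma_n(t)=\sum_{k}\sum_{j}\eta(t-a_{k,j})+\sigma_n(-t)$ with roughly $2^{\theta(k-1)}$ unit bumps in generation $k$ spread over $[2^{k-1},2^k]$ at spacing $2^{(1-\theta)(k-1)}$, where $\theta=\tfrac{d(q-q_1)}{q}$. The sparseness guarantees $\sup_{\rho,t_0}\rho^{-\theta}\int_{t_0-\rho}^{t_0+\rho}\sigma_n^r\,dt\sim 1$ (any window of length $\rho$ meets only $\lesssim\rho^{\theta}$ bumps), while the total mass grows: $\int_{\mathbb{R}}\sigma_n^r\,dt\sim 2^{\theta n}$ for every $r>0$. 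Tensoring with a bump in the remaining $d-1$ variables and then applying the amplitude/dilation scaling $u_n(t,\mathbf{x})=2^{-\alpha dn/q}\sigma_n(2^{-\alpha n}t)\phi(2^{-\alpha n}\mathbf{x})$ with $\alpha=\tfrac{d(q-q_1)}{dp-(d-p)q}$, one now has \emph{three} inputs ($\theta$, $\alpha$, and the amplitude exponent) to satisfy the two normalizations \eqref{es5}, and the leftover factor $2^{(\theta+d\alpha-r\alpha d/q)n}$ is exactly the right-hand side of \eqref{es6}. If you want to repair your proof, you must build some analogue of this sparse array; the single rescaled profile cannot be tuned to do the job.
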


\section{Proof of main results}
\label{sec:proof}

This section is dedicated to proofs of our statements in previous section \ref{sec:main}. We note that the positive constant $C$ may change from an inequality to another in the proofs. \\

\begin{proof}[Proof of Lemma~\ref{lem:1}] 

By Poincar\'e inequality, for $\rho>0$ and any $\mathbf{y} \in \mathbb{R}^d$, we firstly have
\begin{align} \label{eq:1}
	\fint_{B_{\rho}(\mathbf{y})}|u-(u)_{B_{\rho}(\mathbf{y})}|d\mathbf{x}\leq C \rho \fint_{B_{\rho}(\mathbf{y})}|\nabla u|d\mathbf{x}~~,\quad \forall u\in C^1_c(\mathbb{R}^d),
\end{align}
where the term $(f)_{\Omega}$ stands for the average value of $f$ over the domain $\Omega$.
%Let $u \in C_c^1(\mathbb{R}^d)$ and $\rho>0$ it writes
%\begin{align}
%\label{eq:1}
%	\rho^{-1} \fint_{B_{\rho}(\mathbf{y})}|u-(u)_{B_{\rho}(\mathbf{y})}|d\mathbf{x}\leq \fint_{B_{\rho}(\mathbf{y})}|\nabla u|d\mathbf{x}.
%\end{align}
Taking the supremum both sides of \eqref{eq:1} for all $\rho>0$, this gives
\begin{align}\label{est:1b}
\sup_{\rho>0}\rho^{-1}\fint_{B_\rho(\mathbf{y})}|u-(u)_{B_\rho(\mathbf{y})}|d\mathbf{x}\leq C \mathbf{M}(|\nabla u|)(\mathbf{y}),\quad \forall \mathbf{y}\in \mathbb{R}^d.
\end{align}

By the definition of the sharp maximal function $\mathbf{M}_{\#}$ in \eqref{eq:sharpM}, it enables us to decompose it as
\begin{align}
\nonumber
& \left[\mathbf{M}_{\#}(u)(\mathbf{y})\right]^{p\left(\frac{q}{d}+1\right)} \\ \nonumber
& =   \sup_{\rho>0} \left(\rho^{\frac{d}{q}} \fint_{B_\rho(\mathbf{y})}|u-(u)_{B_\rho(\mathbf{y})}|d\mathbf{x}\right)^{\frac{pq}{d}} . \left(\rho^{-1}\fint_{B_\rho(\mathbf{y})}|u-(u)_{B_\rho(\mathbf{y})}|d\mathbf{x}\right)^{p} \\ \label{est:1c}
& \le {C \left(\sup_{\rho>0,\, \mathbf{z}\in \mathbb{R}^n}\rho^{\frac{d}{q}}\fint_{B_\rho(\mathbf{z})}{|u|d\mathbf{x}}\right)^{\frac{qp}{d}}} . \left(\sup_{\rho>0} \rho^{-1}\fint_{B_\rho(\mathbf{y})}|u-(u)_{B_\rho(\mathbf{y})}|d\mathbf{x}\right)^{p}.  
\end{align}

Thanks to~\eqref{est:1b} and~\eqref{est:1c}, one also has the related inequality between the Hardy-Littlewood maximal function $\mathbf{M}$ and the sharp maximal function $\mathbf{M}_{\#}$ as in the following formula:
\begin{align}
\label{es3}
\left[\mathbf{M}_{\#}(u)(\mathbf{y})\right]^{p\left(\frac{q}{d}+1\right)}\leq C \left(\sup_{B_\rho(\mathbf{z})}\rho^{\frac{d}{q}}\fint_{B_\rho(\mathbf{z})}{|u|d\mathbf{x}}\right)^{\frac{qp}{d}} \left[\mathbf{M}(|\nabla u|)(\mathbf{y})\right]^{p}.  
\end{align}
%where $\displaystyle{\sup_{B_\rho(\mathbf{z})}}$ denotes $\displaystyle{\sup_{\rho>0,\, \mathbf{z}\in \mathbb{R}^n}}$. 

Refer to \cite{EMS}, it is well known that for any $f\in L^s(\mathbb{R}^d)$ and $s \in (1,\infty)$, there exists a positive constant $C$ such that the following inequalities hold:
\begin{align}
\label{es4}
&C^{-1}	\int_{\mathbb{R}^d}{|f|^s d\mathbf{x}} \leq 	\int_{\mathbb{R}^d}{|\mathbf{M}_{\#}(f)|^s d\mathbf{x}} \leq C 	\int_{\mathbb{R}^d}{|f|^s d\mathbf{x}},\\&\label{es41}
C^{-1}	\int_{\mathbb{R}^d}{|f|^s d\mathbf{x}} \leq 	\int_{\mathbb{R}^d}{|\mathbf{M}(f)|^s d\mathbf{x}}\leq C 	\int_{\mathbb{R}^d}{|f|^s d\mathbf{x}}.
\end{align}

In what follows from \eqref{es3}, \eqref{es4} and \eqref{es41} that 
\begin{align*}
\int_{\mathbb{R}^d}|{u|^{p\left(\frac{q}{d}+1\right)}d\mathbf{x}}&\leq C \int_{\mathbb{R}^d} {\left[\mathbf{M}_{\#}(u)(\mathbf{x})\right]^{p\left(\frac{q}{d}+1\right)}d\mathbf{x}}\\
&\leq C \int_{\mathbb{R}^d}\left[\mathbf{M}(|\nabla u|)(\mathbf{x})\right]^p \left(\sup_{B_\rho(\mathbf{z})}\rho^{\frac{d}{q}}\fint_{B_\rho(\mathbf{z})}{|u(\mathbf{y})|d\mathbf{y}}\right)^{\frac{qp}{d}}d\mathbf{x}\\
&\leq C  \int_{\mathbb{R}^d}|\nabla u|^p \left(\sup_{B_\rho(\mathbf{z})}\rho^{\frac{d}{q}}\fint_{B_\rho(\mathbf{z})}{|u(\mathbf{y})|d\mathbf{y}}\right)^{\frac{qp}{d}}d\mathbf{x}\\
& \leq C \left(\sup_{B_\rho(\mathbf{z})}\rho^{\frac{d}{q}}\fint_{B_\rho(\mathbf{z})}{|u|d\mathbf{x}}\right)^{\frac{qp}{d}} \int_{\mathbb{R}^d}|\nabla u|^p d\mathbf{x},
\end{align*}
and therefore, our proof is complete. 
\end{proof}

\begin{proof}[Proof of Theorem~\ref{theo:coro}]

The strengthened form of Sobolev's inequality asserts that for all $u \in W^{1,p}(\mathbb{R}^d)$, one has
\begin{align*}
\left(\int_{\mathbb{R}^d}{|u|^{\frac{pd}{d-p}}d\mathbf{x}}\right)^{\frac{d-p}{dp}}\leq C \left( \int_{\mathbb{R}^d}{|\nabla u|^p d\mathbf{x}}\right)^{\frac{1}{p}},
\end{align*}
which deduces that if~\eqref{eq:cor1} holds, then 
\begin{align} \label{est:lem1}
\int_{\mathbb{R}^d}{|u|^{\frac{pd}{d-p}}d\mathbf{x}}\leq C.
\end{align}
Thanks to H\"older's inequality, for all $u$ satisfying~\eqref{eq:cor2}, there holds
\begin{align*}
\sup_{B_\rho(\mathbf{z})}\rho^{\frac{d}{q}}\fint_{B_\rho(\mathbf{z})}{|u|d\mathbf{x}}
& = \sup_{B_\rho(\mathbf{z})}{\rho^{\frac{d(1-q)}{q}}} \int_{B_\rho(\mathbf{z})}{|u|d\mathbf{x}}  \\
& \leq \sup_{B_\rho(\mathbf{z})}{\rho^{\frac{d(1-q)}{q}}} . \ \rho^{\frac{d(q_1-1)}{q_1}}\left(\int_{B_\rho(\mathbf{z})}{|u|^{q_1}d\mathbf{x}}\right)^{\frac{1}{q_1}}  \\
& \leq \left(\sup_{B_\rho(\mathbf{z})}{\rho^{\frac{d(q_1-q)}{q}}}\int_{B_\rho(\mathbf{z})}{|u|^{q_1}d\mathbf{x}} \right)^{\frac{1}{q_1}}\\
& \le 1,
\end{align*}
for any $1 \leq q_1 \le q$. In the use of Lemma~\ref{lem:1}, one obtains that
\begin{align} \label{est:lem2}
\int_{\mathbb{R}^d}{|u|^{p\left(\frac{q}{d}+1\right)}d\mathbf{x}}\leq C.
\end{align}
%The estimate~\eqref{1} holds by combining~\eqref{est:lem1}-\eqref{est:lem2} and the interpolation inequality in Lebesgue space which is presented below for the convenience of the reader. \\
For all $ r \in \left[p\left(\frac{q}{d}+1\right),\frac{pd}{d-p}\right]$, it is easily seen that there exists $\theta \in [0,1]$ such that
\begin{align*}
\frac{1}{r} = \frac{\theta d}{p(q+d)} + \frac{(1-\theta)(d-p)}{pd}.
\end{align*}
Applying H\"older's inequality, one gets that
\begin{align}\label{eq:interineq}
\begin{split}
\int_{\mathbb{R}^d} |u|^rd \mathbf{x} & =  \int_{\mathbb{R}^d} |u|^{r\theta} |u|^{r(1-\theta)}d \mathbf{x} \\
& \le \left(\int_{\mathbb{R}^d} \left[|u|^{r\theta}\right]^{\frac{p\left(\frac{q}{d}+1\right)}{r\theta}}d \mathbf{x}\right)^{\frac{r\theta d}{p(q+d)}} \left(\int_{\mathbb{R}^d} \left[|u|^{r(1-\theta)}\right]^{\frac{pd}{r(1-\theta)(d-p)}}d \mathbf{x}\right)^{\frac{r(1-\theta)(d-p)}{pd}}\\
& \le \left(\int_{\mathbb{R}^d} |u|^{p\left(\frac{q}{d}+1\right)}d \mathbf{x}\right)^{\frac{r\theta d}{p(q+d)}} \left(\int_{\mathbb{R}^d} |u|^{\frac{pd}{d-p}}d \mathbf{x}\right)^{\frac{r(1-\theta)(d-p)}{pd}}.
\end{split}
\end{align}

It should be mentioned that the obtained inequality \eqref{eq:interineq} is also known as the interpolation inequality in Lebesgue spaces, that has been studied in \cite{EMS,Stein2011,Tao}. 

From what already been proved in ~\eqref{est:lem1},~\eqref{est:lem2} and \eqref{eq:interineq}, the proof of Theorem \ref{theo:coro} is then complete.
\end{proof}

Next, let us now give a proof of what stated in Corollary \ref{rmk}.

\begin{proof}[Proof of Corollary~\ref{rmk}]

If $q_1 = q$ in Theorem~\ref{theo:coro}, the assumption~\eqref{eq:cor2} becomes
\begin{align*}
\sup_{B_\rho(\mathbf{z})}\int_{B_\rho(\mathbf{z})}{|u|^{q}d\mathbf{x}}\leq  1,
\end{align*} 
and it follows that 
\begin{align}\label{eq:rmk}
\int_{\mathbb{R}^n}{|u|^{q}d\mathbf{x}}\leq  1.
\end{align} 
The estimate~\eqref{1} holds for any $r \in \left[q, p\left(\frac{q}{d}+1\right)\right]$ from~\eqref{est:lem2}, \eqref{eq:rmk} and repeated application of interpolation inequality in Lebesgue space as in \eqref{eq:interineq}.
\end{proof}

\begin{proof}[Proof of Theorem~\ref{theo:main}]

Let $\phi\in C^\infty_c(\mathbb{R}^{d-1},[0,1])$ and $\eta\in C_c^\infty(\mathbb{R},[0,1])$ be given as 
\begin{align*}
\phi (\mathbf{x}) = \begin{cases} 
1, \quad \mathbf{x} \in B_1(0)\\
0, \quad \mathbf{x} \in B_2(0)^c
\end{cases},
\qquad
\eta(t) = \begin{cases}
1, \quad t \in (-1,1)\\
0, \quad t \in (-2,2)^c
\end{cases}
\end{align*}
and satisfy $|\nabla \phi(\mathbf{x})| \leq 1, \ \forall \mathbf{x} \in \mathbb{R}^{d-1}$ and $|\eta'(t)|\leq 1, \ \forall t \in \mathbb{R}$.\\

Moreover, let us choose a parameter $\theta\in \left(0,10^{-10d}\right)$ and  $n\geq 100/\theta$. Then, for any fixed $k\geq 10/\theta$,  we set the sequence $\{a_{k,j}\}$ in terms of
\begin{align*}
a_{k,j}=2^{k-1}+1+j2^{k-1-\theta(k-1)}, \quad \mbox{ for all } 1\leq j\leq 2^{\theta(k-1)}-3.
\end{align*}
It is easy to check that for any $1\leq j\leq 2^{\theta(k-1)}-3$, one has 
$$a_{k,j}\in (2^{k-1}+2,2^k-2).$$ 
Let us set functions $\chi$ and $\sigma$ given as follows
\begin{align*}
\chi_n(t)=\sum_{k\geq 10\theta^{-1}}^{n}\sum_{j=1}^{2^{\theta(k-1)}-3}\eta\left(t-a_{k,j}\right) \ \quad \text{and} \ \quad \sigma_n(t)= \chi_n(t)+\chi_n(-t).
\end{align*}
Firstly, since 
$$\mbox{supp}(\eta(.-a_{k,j}))\cap \mbox{supp}(\eta(.-a_{k',j'}))=\emptyset,~~\forall (k,j)\not= (k',j'),$$  
we then have that  for any $r>0$,
\begin{align*}
\sup_{\rho>0, t_0\in \mathbb{R}}\left(\rho^{-\theta}\int_{t_0-\rho}^{t_0+\rho}\sigma_n(t)^rdt\right)~\sim~1,~~\forall n\geq 100/\theta,
\end{align*}
and 
\begin{align}\label{es1}
\int_{\mathbb{R}} \sigma_n(t)^rdt \sim 2^{\theta n},~~\forall n\geq 100/\theta,
\end{align}
together with that fact that
\begin{align}\label{es2}
\int_{\mathbb{R}} |\sigma_n'(t)|^rdt \sim 2^{\theta n},~~\forall n\geq 100.
\end{align}
On the other hand, for $n>100/\theta$, we define the following sequence of functions:
\begin{align}
u_n(t,\mathbf{x})=2^{-\frac{\alpha dn}{q} }\sigma_n\left(2^{-\alpha n}t\right)\phi\left(2^{-\alpha n}\mathbf{x}\right),\quad \forall (t,\mathbf{x})\in \mathbb{R}\times \mathbb{R}^{d-1},
\end{align}
where $\theta=\frac{d(q-q_1)}{q}\in \left(0,10^{-10d}\right)$ and $\alpha=\frac{d(q-q_1)}{dp-(d-p)q}$.\\

We now prove that $u_n$ satisfies \eqref{es5} and \eqref{es6}. Indeed, 
\begin{itemize}
\item[(i)] Firstly, one can compute 
\begin{align}\nonumber
\int_{\mathbb{R}^d}{|\nabla u_n|^p d\mathbf{x} dt} &=2^{-\frac{p\alpha dn}{q} }\int_{\mathbb{R}}|\sigma_n'(2^{-\alpha n}t)|^p2^{-p\alpha n}dt\int_{\mathbb{R}^{d-1}}\phi\left(2^{-\alpha n}\mathbf{x}\right)^p d\mathbf{x}\\ \nonumber
& \qquad +2^{-\frac{p\alpha dn}{q} }\int_{\mathbb{R}}|\sigma_n(2^{-\alpha n}t)|^pdt\int_{\mathbb{R}^{d-1}}|\nabla\phi\left(2^{-\alpha n}\mathbf{x}\right)|^p2^{-p\alpha n}d\mathbf{x}\\ \nonumber
&= 2^{-\frac{p\alpha dn}{q} }\int_{\mathbb{R}}|\sigma_n'(t)|^p2^{\alpha n-p\alpha n}dt\int_{\mathbb{R}^{d-1}}\phi\left(\mathbf{x}\right)^p2^{(d-1)\alpha n}d\mathbf{x}\\ \nonumber
&\qquad +2^{-\frac{p\alpha dn}{q} }\int_{\mathbb{R}}|\sigma_n(t)|^p2^{\alpha n}dt\int_{\mathbb{R}^{d-1}}|\nabla\phi(\mathbf{x})|^p2^{(d-1)\alpha n-p\alpha n}d\mathbf{x}\\ \nonumber
&= 2^{n\left({-\frac{p\alpha d}{q}+d\alpha -p\alpha}\right)}\left(\int_{\mathbb{R}}|\sigma_n'(t)|^pdt \int_{\mathbb{R}^{d-1}}\phi(\mathbf{x})^pd\mathbf{x} \right. \\ \label{est:t1}
 & \qquad \qquad \qquad \qquad \qquad \qquad \left. +\int_{\mathbb{R}}|\sigma_n(t)|^p dt\int_{\mathbb{R}^{d-1}}{|\nabla\phi(\mathbf{x})|^p}d\mathbf{x}\right).
\end{align}
For $\theta$ and $\alpha$ are chosen above, it is easy to obtain $-\frac{p\alpha d}{q}+d\alpha -p\alpha +\theta =0$, and by \eqref{es1}, \eqref{es2} and \eqref{est:t1} one has
\begin{equation}
\int_{\mathbb{R}^d}|\nabla u_n|^p \sim 2^{-\frac{p\alpha dn}{q}+d\alpha n-p\alpha n+\theta n}=1,~~\forall n>100/\theta.
\end{equation}
which implies the first inequality of \eqref{es5}.
\item[(ii)] By changing variables inside the integrals, one gets
\begin{align*}
&\sup_{B_\rho(s,y)\subset \mathbb{R}^d}\left(\rho^{-\theta}\int_{B_\rho(s,y)}|u_n(t,\mathbf{x})|^{q_1}dtd\mathbf{x}\right) \sim\sup_{\rho>0}\left(\rho^{-\theta}\int_{B_\rho(0)}|u_n(t,\mathbf{x})|^{q_1}dtd\mathbf{x}\right) \\
&~~~\sim 2^{-\frac{q_1\alpha dn}{q} }\sup_{\rho>0} \left(\rho^{-\theta}\int_{-\rho}^{\rho}\sigma_n\left(2^{-\alpha n}t\right)^{q_1}dt \int_{|\mathbf{x}|<\rho}\phi\left(2^{-\alpha n}\mathbf{x}\right)^{q_1}d\mathbf{x}\right)
\\
&~~~\sim 2^{-\frac{q_1\alpha dn}{q} }\sup_{\rho>0} \left(\rho^{-\theta}\int_{-2^{-\alpha n}\rho}^{2^{-\alpha n}\rho}\sigma_n\left(t\right)^{q_1}2^{\alpha n}dt \int_{|\mathbf{x}|<2^{-\alpha n}\rho}\phi\left(\mathbf{x}\right)^{q_1} 2^{(d-1)\alpha n}d\mathbf{x}\right) \\
&~~~\sim 2^{-\frac{q_1\alpha dn}{q} }\sup_{\rho> 2^{\alpha n}}\left( \rho^{-\theta}\int_{-2^{-\alpha n}\rho}^{2^{-\alpha n}\rho}\sigma_n\left(t\right)^{q_1}2^{\alpha n}dt \int_{|\mathbf{x}|<2^{-\alpha n}\rho}\phi\left(\mathbf{x}\right)^{q_1} 2^{(d-1)\alpha n}d\mathbf{x} \right)\\
&~~~\overset{\eqref{es1}}\sim 2^{-\frac{q_1\alpha dn}{q} } 2^{\alpha dn}2^{-\alpha\theta n}=1.
\end{align*}
Thus, 
\begin{align*}
\sup_{B_\rho(s,y)\subset \mathbb{R}^d}\rho^{-\theta}\int_{B_\rho(s,y)}|u_n|^{q_1}d\mathbf{x}\sim 1,~~\forall n>100/\theta.
\end{align*}
which implies the second inequality of  \eqref{es5}.
\item[(iii)] On the other hand,  for any $r>0$, it gives the following estimate
\begin{align*}
\int_{\mathbb{R}^d}|u_n(t,\mathbf{x})|^rdtd\mathbf{x}&=2^{-\frac{r\alpha dn}{q} }\int_{\mathbb{R}}\sigma_n\left(2^{-\alpha n}t\right)^rdt \int_{\mathbb{R}^{d-1}}\phi\left(2^{-\alpha n}\mathbf{x}\right)^pd\mathbf{x}\\
& =2^{-\frac{r\alpha dn}{q}+d\alpha n}\int_{\mathbb{R}}\sigma_n\left(t\right)^rdt \int_{\mathbb{R}^{d-1}}\phi\left(\mathbf{x}\right)^pd\mathbf{x}\\
&\overset{\eqref{es1}} \sim 2^{-\frac{r\alpha dn}{q}+d\alpha n+\theta n}=2^{\frac{d^2(q-q_1)}{q(dp-(d-p)q)}\left(p(1+\frac{q}{d})-r\right)n}.
\end{align*}
which implies \eqref{es6}.
\end{itemize}
Finally, from what already been proved in~\eqref{es6}, it concludes that the estimate~\eqref{es7} holds if and only if $r \ge p\left(\frac{q}{d}+1\right)$. 

\end{proof}

\end{document}